\documentclass[12pt, twoside, leqno]{amsart}

\usepackage{amssymb}

\newtheorem{theorem}{Theorem}[section]
\newtheorem{corollary}[theorem]{Corollary}

\theoremstyle{definition}
\newtheorem{definition}[theorem]{Definition}
\newtheorem{remark}[theorem]{Remark}
\newtheorem{example}[theorem]{Example}
\numberwithin{equation}{section}

\newcommand{\C}{\mathbb{C}}

\newcommand{\be}{\begin{equation}}
\newcommand{\ee}{\end{equation}}
\newcommand{\bp}{\begin{proof}}
\newcommand{\ep}{\end{proof}}
\newcommand{\ben}{\begin{enumerate}}
\newcommand{\een}{\end{enumerate}}
\newcommand{\ba}{\begin{array}}
\newcommand{\ea}{\end{array}}
\newcommand{\bal}{\begin{align}}
\newcommand{\eal}{\end{align}}

\def\Real{\mathop{\rm Re}}

\def\rc#1{\frac{1}{#1}}

\begin{document}
\title{On Divisor Function Polynomials and Dirichlet Series}
%%% Give just the affiliation and not the detailed postal address.
\author{M. N. Huxley, M. C. Lettington, K. M. Schmidt\\
Cardiff University UK} 
\date{October 9, 2026}
\maketitle
%% Classification and key words; note that the 2020 classification is used.
%% Only one classification should be declared as primary.
\renewcommand{\thefootnote}{}

\footnote{2020 \emph{Mathematics Subject Classification}: Primary 11A25, 11C08, 11M41; Secondary 11B83, 11B73, 11B13, 11B68.}

\footnote{\emph{Key words and phrases}: divisor functions, divisor function polynomials, Dirichlet series, Bernoulli numbers, Stirling and Bell numbers, sumsets.}

\renewcommand{\thefootnote}{\arabic{footnote}}
\setcounter{footnote}{0}

%%%%%%%%

\begin{abstract}
We find the Dirichlet series and convergence bounds for the counting function of sum systems
(collections of sets of non-negative integers whose Minkowski sum is an
arithmetic progression).
This motivates the definition and study of divisor function polynomials,
which are generating functions for the non-trivial divisor function and
for the balanced divisor function that counts, up to a sign, non-trivial
ordered factorisations into squarefree factors. These divisor function polynomials
turn out to be intimately connected between each other and with Bernoulli
numbers and ordered Bell polynomials.
\end{abstract}
\section{Introduction}
\markboth{M. N. Huxley, M. C. Lettington and K. M. Schmidt}{On Divisor Function Polynomials and Dirichlet Series}
An interesting variant of the classical $j$-th divisor function $d_j$,
which counts ordered factorisations of natural numbers into $j$ factors,
and of the somewhat less studied non-trivial divisor function $c_j$ (see \cite{ect} pp. 1-7),
which counts ordered factorisations with all factors $> 1$,
appears in the counting formula for $m$-part sum systems
%In what follows we are primarily concerned with arithmetic functions, their Dirichlet series and Taylor polynomials, related to the balanced divisor function $c_j^{(-j)}(n)$, defined for all natural number $n$, with $j\geq 0$.
%The value of the balanced divisor function has the combinatorial interpretation of
%$(-1)^{\Omega(n) + j}$ times the number of ordered factorisations of $n$ into $j$ non-trivial, squarefree factors (see \S 2 of \cite{mcl4}), where $\Omega(n)$ counts the number of prime factors of $n$ including repeats. We notate its absolute value as $|c_j^{(-j)}|=b_j$.
%
%The balanced divisor function appears in the counting formula
\cite[Theorem 1.5]{adl},
%for the number of all $m$-part joint ordered factorisations arising from a given natural number $n$, denoted by $\mathcal{N}_m(n)$.
%[EXPAND A BIT:]
%These are ordered factorisations of an $n$-tuple of natural numbers $a =(n_1, . ..,n_m)$, arising from writing each of these numbers as a product of non-trivial factors, i.e., factors $\geq 2$, and then arranging all factors in a linear chain such that no two adjacent factors arise from the factorisation of the same number $n_j$.
%
%For natural numbers $m,n \in \mathbb{N}$, the number of $m$-part joint ordered factorisations arising from $n$
%is equal to
\be\label{enum22}
 {\mathcal N}_m(n)
 %= \sum_{L=0}^{\Omega{(n)}} (-1)^L m!\, S(L,m)\, (\mu-e)^{*L}(n)
 = \sum_{L=1}^{\Omega{(n)}} m!\, S(L,m)\, c_L^{(-L)}(n).
\ee
An $m$-part sum system is a collection of subsets $A_1, \dots, A_m$ of
$\mathbb N_0$ such that the Minkowski sum
\begin{equation*}
A_1 + \cdots + A_m := \left\{\sum_{j=1}^m a_j \mid a_j\in A_j\ (j\in\{1, \dots, m\})\right\} = \{0, 1, 2, \dots, n-1\},
\end{equation*}
where each number in the target set appears exactly once as a sum
(or equivalently, the product of the cardinalities of $A_1, \dots, A_m$ equals $n$).
$\mathcal N_m(n)$ is the number of all different $m$-part sum systems for the
given $n$.
In formula (\ref{enum22}), we see that alongside the well-known Stirling numbers of the second kind, $S(L, m)$, which can be defined as the number of ways
to partition a set of $L$ objects into $m$ non-empty subsets
\cite{riordan}, there appear the arithmetic functions $c_j^{(-j)}$, which we
call the balanced divisor functions.
The number $c_j^{(-j)}(n)$ counts, up to the factor $(-1)^{j + \Omega(n)}$,
the non-trivial ordered factorisations of $n$ into squarefree factors \cite{mcl4}.

In the present paper, we investigate the arithmetic functions $b_j = |c_j^{(-j)}|$ further (Section \ref{sec:df}) and observe that $c_j^{(-j)}$ and $c_j$ are linked in a surprising
way as their generating functions (divisor function polynomials) essentially
coincide up to a unitary shift.
We find the Dirichlet series for these divisor function polynomials (in Section \ref{sec:dfp}) and of the sum system counting function $\mathcal N_m$ (in Section \ref{sec:DN}).
In Section \ref{sec:Bell} we explore connections between the divisor function
polynomials for squarefree numbers and the Bernoulli numbers, ordered Bell numbers, ordered Bell polynomials
and a signed variant of the latter.
%[I CHANGED THE SUM FROM $L\ge 0$ TO $L\ge 1$ TO AVOID LATER COMPLICATIONS;
%THIS SHOULD GIVE THE SAME AS $m\ge 1$.]
%where the well-known Stirling numbers of the second kind $S(L,m)$ count the number of ways to partition a set of $L$ objects into $m$ nonempty subsets \cite{riordan}.
%
%
%In contrast [TO WHAT? PLACE?], the classical $j$-th divisor function $d_j(n)$ counts the number of ordered factorisations
%$n=n_1 n_2\ldots n_j$, with each factor $n_k\geq 1$. When we require that each factor is non-trivial, so each $n_k\geq 2$, we have the less ubiquitous $j$-th non-trivial divisor function, $c_j(n)$ (cf. \S 1 of \cite{ect} or \cite{mcl2}).
%
%Three other arithmetic functions which naturally arise in our considerations are the classical M\"obius function $\mu$ defined by $\mu(n)=(-1)^{\Omega(n)}$ if $n$ is squarefree (has no repeated prime factors) and $0$ otherwise; Liouville's function $\lambda$ defined for all natural numbers $n$ by $\lambda(n)=(-1)^{\Omega(n)}$, and the Dirichlet convolution identity function $e$, defined by $e(n)=\delta_{n,1}$, which takes the value $1$ when $n=1$ and 0 otherwise. These arithmetic functions obey the pointwise product relations
%\[
%(-1)^j\lambda b_j=c_j^{(-j)},\quad \lambda\mu=|\mu|,\quad \lambda e =\mu e =e;
%\]
%where we set $c_0^{(0)}=d_0=c_0=e$ [PLACE?].

\section{Divisor functions}\label{sec:df}

Divisor functions can conveniently be expressed in terms of the commutative Dirichlet convolution algebra of arithmetic functions, defined below.
\begin{definition} The {\it Dirichlet convolution\/} of arithmetic functions $f_1, f_2, \dots, f_j$
is defined as
\begin{align}
 (f_1 * f_2 * \cdots * f_j)(n) &= \sum_{n_1 n_2 \cdots n_j = n} f_1(n_1) f_2(n_2) \cdots f_j(n_j),
\label{econvo}
\end{align}
summing over all ordered factorisations of $n \in {\mathbb N}$ into $j$ factors.
The convolution is evidently commutative and linear in each function $f_1, \dots, f_j$.
The $j$th convolution power is notated as $f^{*j} := f * f * \cdots * f$,
where $f$ is repeated $j$ times on the right-hand side.
\end{definition}
The function $e$, defined by $e(n) = \delta_{n, 1}$, which takes the value 1
when $n = 1$ and~0 otherwise, is the neutral element of the Dirichlet convolution
product; hence $f^{*0} = e$.
We write $f^{-*}$ for the convolution inverse (which exists if and only if
$f(1) \neq 0$ \cite[Theorem 2.2.(2)]{rSchSp}). The convolution inverse of the constant function 1
is the M\"obius function,
$1^{-*} = \mu$, which is defined as $\mu(n) = (-1)^{\Omega(n)}$ if $n$ is
squarefree (has no repeated prime factors) and 0 otherwise.

The divisor function $d_j$ counting the number of ordered factorisations can
be written as $d_j = 1^{*j}$; the non-trivial divisor function $c_j$ counting
the number of ordered factorisations into factors $> 1$ can be written as
$c_j = (1 - e)^{*j}$. Hence the sum-over-divisors relations, in more explicit
notation
\[
d_j(n)=\sum_{k\mid n}d_{j-1}(k),\qquad c_j(n)=\sum_{k\mid n,\ k<n}c_{j-1}(k)\qquad (n\in\mathbb N; j\in\mathbb N),
\]
follow.

To complement $c_j$, associated divisor functions $c_j^{(r)}$ (with $j \in
\mathbb N_0$ and $r\in\mathbb Z$) were introduced in
\cite{mcl2} and \cite{mcl4}. For $r > 0$, $c_j^{(r)} = (1 - e)^{*j}*1^{*r}
= c_j * d_r$
is a hybrid of the plain and non-trivial divisor functions; $c_j^{(0)} = c_j$.
For negative upper index, we have $c_j^{(-r)} = (1 - e)^{*j} * \mu^{*r}$ (where $r > 0$), using the fact that the M\"obius function is the convolution inverse of 1.
The functions $\mu^{*r} = c_0^{(-r)}$ have been studied by Popovici \cite{rPopo}.

The associated divisor function with upper index equal to the negative of the lower
index, $c_j^{(-j)} = (e - \mu)^{*j} = (-1)^j\,(\mu - e)^{*j}$, which we here call the {\it balanced divisor function\/},
plays a special role as it appears naturally in the counting of sum systems
\cite{mcl2, mcl4, adl}, see equation (\ref{enum22}).
The value $c_j^{(-j)}(n)$ can be interpreted as counting, up to the sign
factor $(-1)^{j + \Omega(n)}$, the number of ordered factorisations of $n$
into $j$ squarefree factors $> 1$ (see \cite{mcl4}). This motivates the definition of the
{\it squarefree non-trivial divisor function\/}
\begin{equation}\label{eq:bdef}
b_j := |c_j^{(-j)}| = (-1)^j\,\lambda\,c_j^{(-j)},
\end{equation}
with Liouville's function
$\lambda(n) = (-1)^{\Omega(n)}$ $(n\in\mathbb N)$.
We immediately obtain the relations
\begin{equation*}
 c_j =(1-e)^{*j}= 1^{*j}*(e - \mu)^{*j} = d_j * c_j^{(-j)}
\end{equation*}
and
\be
\label{lembj}
 b_j = (-1)^j \lambda\,c_j^{(-j)} = (-1)^j \lambda\,(e - \mu)^{*j} = \lambda\,(\mu - e)^{*j}.
\ee
The latter equation can be simplified as shown in the following theorem, where $\zeta(s)$ denotes the Riemann zeta function \cite{ivic,mcl5}.
\begin{theorem}\label{thm:Db}
For $j\in\mathbb N_0$,
$$ b_j = (|\mu| - e)^{*j}. $$
The corresponding Dirichlet series has the form
\[
\sum_{n=1}^\infty \frac{b_j}{n^s}=\left ( \frac{\zeta(s)}{\zeta(2s)}-1\right)^j
\qquad (\Real s > 1).
\]
\end{theorem}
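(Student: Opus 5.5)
We start from (\ref{lembj}), $b_j = \lambda\,(\mu - e)^{*j}$, and move the pointwise factor $\lambda$ inside the convolution. The key observation is that $\lambda$ is completely multiplicative. Hence for any arithmetic functions $f_1,\dots,f_j$,
\[
\lambda\,(f_1*\cdots*f_j) = (\lambda f_1)*\cdots*(\lambda f_j).
\]
Indeed, in (\ref{econvo}) every term has $n_1\cdots n_j=n$, so $\lambda(n)=\lambda(n_1)\cdots\lambda(n_j)$, and this factor can be distributed over the product $f_1(n_1)\cdots f_j(n_j)$ term by term. This is the only real step; the rest is bookkeeping.

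Applying this with $f_i=\mu-e$ gives $b_j = (\lambda\mu - \lambda e)^{*j}$. Next we compute the pointwise products. Since $e$ is supported at $n=1$ and $\lambda(1)=1$, we have $\lambda e = e$. For $\lambda\mu$: if $n$ is not squarefree, both $\mu(n)$ and $|\mu(n)|$ vanish; if $n$ is squarefree, $\mu(n)=(-1)^{\Omega(n)}=\lambda(n)$, so $\lambda(n)\mu(n)=1=|\mu(n)|$. Thus $\lambda\mu=|\mu|$, and so $b_j=(|\mu|-e)^{*j}$. The case $j=0$ is consistent with the convention $f^{*0}=e$, since $b_0=|c_0^{(0)}|=e$.

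For the Dirichlet series we use that Dirichlet series turn convolution into multiplication, for absolutely convergent series. The Dirichlet series of $e$ is $1$. The Dirichlet series of $|\mu|$ is the squarefree generating function
\[
\sum_{n\ge1} \frac{|\mu(n)|}{n^s}=\prod_p\bigl(1+p^{-s}\bigr)=\prod_p\frac{1-p^{-2s}}{1-p^{-s}}=\frac{\zeta(s)}{\zeta(2s)},
\]
valid for $\Real s>1$ by the Euler product. Since $|\,|\mu|-e\,|\le 1$, the series for $|\mu|-e$ converges absolutely for $\Real s>1$. Therefore the product of $j$ copies converges absolutely there and equals the Dirichlet series of the $j$-fold convolution, which gives $\bigl(\zeta(s)/\zeta(2s)-1\bigr)^j$.

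The main point needing care is the first step: the distributivity of pointwise multiplication by $\lambda$ over convolution, which relies on complete (not mere) multiplicativity. The remaining steps are routine.
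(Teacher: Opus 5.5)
Your proof is correct and follows the paper's argument: you distribute the completely multiplicative $\lambda$ over the convolution, use $\lambda\mu=|\mu|$ and $\lambda e=e$, and then multiply Dirichlet series. The only minor difference is that you get $\sum_{n}|\mu(n)|n^{-s}=\zeta(s)/\zeta(2s)$ directly from the Euler product, whereas the paper deduces it from $|\mu|=\lambda^{-*}$ and the known series $\zeta(2s)/\zeta(s)$ of $\lambda$; you are also more explicit about absolute convergence.
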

\begin{proof} From equation (\ref{lembj})
\begin{align*}
 b_j(n) &= \lambda(n) \sum_{n_1 n_2 \cdots n_j = n} \prod_{k=1}^j (\mu(n_k) - e(n_k))
 = \sum_{n_1 n_2 \cdots n_j = n} \prod_{k=1}^j \lambda(n_k) (\mu(n_k) - e(n_k))
 & \\
 &= \sum_{n_1 n_2 \cdots n_j = n} \prod_{k=1}^j (|\mu(n_k)| - e(n_k))
 = (|\mu| - e)^{*j}(n)
\qquad (n\in\mathbb N),
 \end{align*}
where we used the complete multiplicativity of
$\lambda$
and the facts that
$\lambda \mu = |\mu|$
and
$\lambda e = e.$

For the Dirichlet series, we first note that for a general arithmetic function $f(n)$ with Dirichlet series $F(s)$, the arithmetic function $f(n)-e(n)$ has Dirichlet series
\[
\sum_{n=1}^\infty \frac{f(n)-e(n)}{n^s}=\sum_{n=1}^\infty \frac{f(n)}{n^s}-1=F(s)-1.
\]
Also, the Dirichlet series of a convolution of arithmetic functions is equal
to the product of their individual Dirichlet series.
Hence the $j$-th convolution power $(f(n)-e(n))^{*j}$ has Dirichlet series $(F(s)-1)^j$.

The Dirichlet series for Liouville's function $\lambda$ is given by $\zeta(2s)/\zeta(s)$ \cite{Lehman}.
Moreover, the Dirichlet convolution inverse of $\lambda$ is
the absolute value of the Möbius function \cite[Ex.\ I.8.(8)]{rSchSp},
$\lambda^{-*}(n) = \mu(n)\,\lambda(n) = |\mu(n)| = \mu^2(n)$, which is the characteristic function of the squarefree integers.

It follows that the function $(|\mu| - e)$ has the Dirichlet series $\zeta(s)/\zeta(2s)-1$ (see also \cite[Corollary 8.20]{NZM}), and consequently
\[
\sum_{n=1}^\infty \frac{b_j(n)}{n^s}=\left ( \frac{\zeta(s)}{\zeta(2s)}-1\right)^j.
\qedhere
\]
\end{proof}
\begin{remark}
In view of equation (\ref{eq:bdef}), %$b_j = (-1)^j \lambda\,c_j^{(-j)}=|c_j^{(-j)}|$,
it is of interest to compare this with the
Dirichlet series for $c_j^{(-j)}$
(see \cite{mcl4}),
\begin{equation}\label{eq:Dbal}
\sum_{n=1}^\infty \frac{c_j^{(-j)}(n)}{n^s} = \left (1-\frac{1}{\zeta(s)}\right )^j.
\end{equation}
\end{remark}

\section{Divisor function polynomials}\label{sec:dfp}

In the following, we consider formal power series (generating functions) for
the non-trivial and the balanced divisor functions.
In that these are sums over the index $j$ in $c_j(n)$ and $c_j^{(-j)}(n)$ for fixed integer $n\in\mathbb{N}$, they are a distant relative of the divisor summatory function.
Note that there is no question of convergence as, for each fixed $n\in\mathbb N$, only finitely many terms of the series are non-zero.
We therefore call these formal power series divisor function polynomials.
\begin{definition}\label{defn:1}
The {\it non-trivial and balanced divisor function polynomials\/} are defined for $n\in\mathbb N$ and $x\in\mathbb C$ as
\[
P(n,x)=\sum_{j=1}^\infty x^{j-1}c_j(n),\qquad Q(n,x)=\sum_{j=1}^\infty x^{j-1}c_j^{(-j)}(n),
\]
respectively.
\end{definition}
\begin{remark}
As an immediate consequence, the non-trivial and balanced divisor function polynomials are the generating functions for the respective divisor functions, satisfying
\[
\frac{1}{(k-1)!}\,\frac{\partial^{k-1}}{\partial x^{k-1}}P(n,0) =c_k(n),\quad
\frac{1}{(k-1)!}\,\frac{\partial^{k-1}}{\partial x^{k-1}}Q(n,0) =c_k^{(-k)}(n).
\]
\end{remark}
The divisor function polynomials for the non-trivial and for the balanced
divisor functions are connected and indeed arise from the same family of
polynomials, as shown in the next theorem.
\begin{theorem}
\label{thm11}
For each $n\in\mathbb N$,
the divisor function polynomials $P(n,\cdot)$ and $Q(n,\cdot)$ satisfy the identity
\be\label{PQlem}
P\left(n,x-1\right)=Q\left(n,x\right) \quad (x \in \C).
\ee
\end{theorem}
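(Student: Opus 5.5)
The natural approach is to work in the Dirichlet convolution algebra and treat both divisor function polynomials as geometric series in a convolution variable. Write $u = 1 - e$, so that $c_j = u^{*j}$, and write $v = e - \mu$, so that $c_j^{(-j)} = v^{*j}$. For fixed $n$, only terms with $j \le \Omega(n)$ survive, because $u(1) = v(1) = 0$. Hence we may regard
\[
P(\cdot,x) = \sum_{j\ge 1} x^{j-1} u^{*j}, \qquad Q(\cdot,x) = \sum_{j\ge1} x^{j-1} v^{*j}
\]
as arithmetic functions depending polynomially on $x$ at each $n$. Formally $P(\cdot,x) = u*(e - xu)^{-*}$. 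The inverse exists because $(e-xu)(1) = 1 \neq 0$, and the geometric series representing it is a finite sum at each $n$. The identity (\ref{PQlem}) then becomes an identity between convolution quotients, which can be checked by clearing denominators.

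Concretely, I would verify
\[
(e-(x-1)u)*Q(\cdot,x) = u \quad\text{and}\quad (e-xv)*Q(\cdot,x) = v.
\]
The second identity follows from the telescoping computation $(e-xv)*\sum_j x^{j-1}v^{*j} = v$, which is exact because all sums are finite at each $n$. Uniqueness of convolution inverses then gives $Q(\cdot,x) = v*(e-xv)^{-*}$ and $P(\cdot,x-1) = u*(e-(x-1)u)^{-*}$. It therefore suffices to show
\[
v*(e-(x-1)u) = u*(e-xv),
\]
since the algebra is commutative and both sides can then be multiplied by the two inverses.

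The key relation needed is $v = u*\mu$, that is, $e-\mu = (1-e)*\mu$. This holds because $1*\mu = e$, the fact noted before Theorem \ref{thm:Db}. Substituting $\mu = e - v$ gives $v = u - u*v$, or equivalently $u = v + u*v$. Then the left side is $v - (x-1)u*v = v + u*v - x\,u*v = u - x\,u*v$, and the right side is $u - x\,u*v$. The two sides agree.

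The main obstacle is justifying the manipulations with infinite geometric series, since $P$ and $Q$ are defined only as formal sums. I would handle this by noting that $u^{*j}(n) = v^{*j}(n) = 0$ for $j > \Omega(n)$. Every identity therefore reduces, pointwise in $n$, to a finite polynomial identity in $x$, and the telescoping and inverse arguments are exact rather than merely formal. An alternative route would compare Dirichlet series: using (\ref{eq:Dbal}), one has $\sum_n Q(n,x)n^{-s} = (1-1/\zeta)/(1-x(1-1/\zeta))$ and $\sum_n P(n,x)n^{-s} = (\zeta-1)/(1-x(\zeta-1))$, and these agree after $x\mapsto x-1$ with a short algebraic check. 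This route requires convergence for suitable $x$ and $s$ followed by a uniqueness argument, so the convolution-algebra route is cleaner.
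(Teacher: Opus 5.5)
Your proof is correct, and it takes a genuinely different route from the paper. The paper works coefficient by coefficient. It uses $c_L^{(-L)} = c_L * \mu^{*L}$ and the negative binomial expansion $\mu^{*L} = (e + (1-e))^{*-L} = \sum_{l\ge 0}\binom{l+L-1}{l}(-1)^l (1-e)^{*l}$ to get the explicit formula $c_L^{(-L)} = \sum_{l\ge 0}\binom{l+L-1}{l}(-1)^l c_{l+L}$. It then expands $(x-1)^{j}$ binomially and rearranges the double sum to match the coefficients of $x^{L-1}$.

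You instead sum both geometric series in closed form, $P(\cdot,x) = u*(e-xu)^{-*}$ and $Q(\cdot,x) = v*(e-xv)^{-*}$. Everything then reduces to the single linear relation $u = v + u*v$, that is, $v = u*(e+u)^{-*} = u*\mu$. After that, the theorem is a one-line cross-multiplication in the commutative convolution ring.

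Both arguments rest on the same fact, $e-\mu = (1-e)*\mu$. Yours avoids the binomial bookkeeping and the interchange of the double sum. It is in effect the Dirichlet-series comparison you mention as an alternative, with $\zeta-1$ and $1-1/\zeta$ replaced by $u$ and $v$ inside the convolution ring itself. So it needs neither a convergence region nor a uniqueness theorem for Dirichlet series. What the paper's route buys in exchange is that explicit by-product formula, which expresses each balanced divisor function $c_L^{(-L)}$ as an alternating binomial combination of the non-trivial divisor functions $c_{l+L}$.

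One expository point. The first identity you say you would ``verify'', $(e-(x-1)u)*Q(\cdot,x) = u$, is not checked independently; it is equivalent to the conclusion. The argument you actually give is nonetheless complete: the telescoping for both $P$ and $Q$, the identity $v*(e-(x-1)u) = u*(e-xv)$, and uniqueness of inverses.
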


\begin{proof}
Applying Newton's binomial theorem
\be
 \rc{(1+x)^s} = \sum_{l=0}^\infty \binom{l + s - 1}{l} (-1)^l  x^l
\ee
in the commutative Dirichlet convolution algebra of arithmetic functions, we obtain the identity
\begin{align*}
 \mu^{*L} &= 1^{*-L}
 = (e + (1 - e))^{*-L}
 = \sum_{l=0}^\infty \binom{l + L - 1}{l} (-1)^l  (1 - e)^{*l}
\end{align*}
and consequently
\begin{align*}
 c_L^{(-L)} &= (e - \mu)^{*L} * 1^{*L} * \mu^{*L}\\
 &= (1 - e)^{*L} * \sum_{l=0}^\infty \binom{l + L - 1}{l} (-1)^l  (1 - e)^{*l}\\
 &= \sum_{l=0}^\infty \binom{l + L - 1}{l} (-1)^l  (1 - e)^{*l+L}
 = \sum_{l=0}^\infty \binom{l + L - 1}{l} (-1)^l  c_{l + L}.
\end{align*}
Therefore we find
\begin{align*}
 \sum_{L=1}^\infty (x - 1)^{L-1}  c_L &= \sum_{j=0}^\infty (x - 1)^j  c_{j+1}
 = \sum_{j=0}^\infty \sum_{k=0}^j \binom{j}{k} x^k (-1)^{j-k}  c_{j+1}\\
 &= \sum_{k=0}^\infty x^k \sum_{l=0}^\infty \binom{l + k}{k} (-1)^l c_{l+k+1}
 \\
 &= \sum_{L=1}^\infty x^{L-1} \sum_{l=0}^\infty \binom{l + L - 1}{l} (-1)^l  c_{l+L}
 = \sum_{L=1}^\infty x^{L-1}  c_L^{(-L)}.
\end{align*}
(Note that all sums of associated divisor functions are pointwise finite, so
there is no issue of convergence in the above calculations.)
\end{proof}

\begin{theorem}
\label{thm12}
The non-trivial divisor function polynomial $P$ has, for $\Real s > 1$ %such that $\zeta(\Real s)\neq 1$
and $x\in\mathbb C$ such that $|x| < |\zeta(\Real s) - 1|^{-1}$, the Dirichlet series
\begin{equation} \label{eq:DP}
D_{P(\cdot, x)}(s) = \sum_{n=1}^\infty \frac{P(n, x)}{n^s} = \frac{\zeta(s) - 1}{1 + x - x\,\zeta(s)};
\end{equation}
the rational function on the right-hand side gives its analytic continuation
to all $(s, x) \in\mathbb C^2$ such that $s\neq 1$ and $x\neq (\zeta(s) - 1)^{-1}$.

The analytically continued Dirichlet series for the balanced divisor function
polynomial $Q$ and the shifted polynomials $P + e$ and $Q + e$ take,
for $s\in\mathbb C\setminus\{1\}$,
 the form
\begin{align}
&D_{Q(\cdot, x)}(s) = \frac{\zeta(s) - 1}{x + (1 - x)\,\zeta(s)} \qquad (x\in\mathbb C\setminus
\{{\textstyle \frac{\zeta(s)}{\zeta(s) - 1}}\}),
\label{eq:DQ}
\\
&D_{P(\cdot, x) + e}(s) = D_{P(\cdot, x)} + 1 = \frac{x + (1 - x)\,\zeta(s)}{1 + x - x\,\zeta(s)} \qquad (x\in\mathbb C\setminus
\{{\textstyle \frac 1{\zeta(s) - 1}}\}),
\label{eq:DP1}
\\
&D_{Q(\cdot, x) + e}(s) = D_{Q(\cdot, x)}(s) + 1 = \frac{x - 1 + (2 - x)\,\zeta(s)}{x + (1 - x)\,\zeta(s)} \qquad (x\in\mathbb C\setminus
\{{\textstyle \frac{\zeta(s)}{\zeta(s) - 1}}\}).
\label{eq:DQ1}
\end{align}
Moreover, the following identities hold,
\begin{align}
 &D_{P(\cdot, x)} = D_{P(\cdot, x)+e}\,D_{Q(\cdot, x)};
 \label{eq:DPQ}
\\
 &D_{P(\cdot, x) + e}\,D_{Q(\cdot, x) + e} = 2\,D_{P(\cdot, x)} + 1.
 \nonumber
\end{align}
\end{theorem}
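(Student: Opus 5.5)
The plan is to compute the Dirichlet series of $P(\cdot,x)$ termwise. We then obtain $Q$ from Theorem \ref{thm11} by the substitution $x\mapsto x-1$, and finish the remaining identities by algebra on rational functions of $\zeta(s)$.

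Since $c_j=(1-e)^{*j}$, the Dirichlet series of $c_j$ is $(\zeta(s)-1)^j$ for $\Real s>1$. Summing formally gives
\[
\sum_{n=1}^\infty \frac{P(n,x)}{n^s}=\sum_{j=1}^\infty x^{j-1}(\zeta(s)-1)^j=\frac{\zeta(s)-1}{1-x(\zeta(s)-1)},
\]
which is the right-hand side of (\ref{eq:DP}).

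The main care is justifying the interchange of the sums over $n$ and $j$. I would bound $|c_j(n)|\,|n^{-s}|=c_j(n)\,n^{-\Real s}$, using that $c_j\ge 0$. Then
\[
\sum_n\sum_j |x|^{j-1}c_j(n)n^{-\Real s}=\sum_j |x|^{j-1}(\zeta(\Real s)-1)^j .
\]
This series converges precisely when $|x|(\zeta(\Real s)-1)<1$, which is the stated hypothesis $|x|<|\zeta(\Real s)-1|^{-1}$; here $\zeta(\sigma)-1>0$ for real $\sigma>1$. Absolute convergence then permits Fubini. For the continuation claim, note that the right-hand side is meromorphic in $(s,x)$ away from $s=1$ and the zero set of $1+x-x\zeta(s)$, i.e.\ $x\ne(\zeta(s)-1)^{-1}$. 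It agrees with the series on the nonempty open set of absolute convergence, so it is the analytic continuation by the identity theorem. The pole of $\zeta$ at $s=1$ is excluded.

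For $Q$, Theorem \ref{thm11} gives $Q(n,x)=P(n,x-1)$. Replacing $x$ by $x-1$ in (\ref{eq:DP}) turns the denominator into $1+(x-1)-(x-1)\zeta(s)=x+(1-x)\zeta(s)$, which gives (\ref{eq:DQ}). The exceptional set transforms accordingly: $x-1=1/(\zeta(s)-1)$ is equivalent to $x=\zeta(s)/(\zeta(s)-1)$. Alternatively one can check (\ref{eq:DQ}) directly in a region of convergence from (\ref{eq:Dbal}), as $\sum_j x^{j-1}(1-1/\zeta)^j$. Adding $e$ adds $1$ to a Dirichlet series, giving (\ref{eq:DP1}) and (\ref{eq:DQ1}) after combining over the common denominator. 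For example,
\[
\zeta-1+x+(1-x)\zeta=x-1+(2-x)\zeta .
\]

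The final identities are then routine algebra. Write $\zeta=\zeta(s)$, $A=1+x-x\zeta$ and $B=x+(1-x)\zeta$. Then
\[
D_{P+e}\,D_Q=\frac{B}{A}\cdot\frac{\zeta-1}{B}=\frac{\zeta-1}{A}=D_P,
\]
which is (\ref{eq:DPQ}). Also
\[
D_{P+e}\,D_{Q+e}=\frac{B}{A}\cdot\frac{x-1+(2-x)\zeta}{B}=\frac{x-1+(2-x)\zeta}{A}.
\]
This equals $2D_P+1=\bigl(2\zeta-2+1+x-x\zeta\bigr)/A=\bigl(x-1+(2-x)\zeta\bigr)/A$. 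Both identities hold as meromorphic functions wherever the factors are defined.

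I expect the only genuine obstacle to be the convergence and continuation bookkeeping: the sign $\zeta(\Real s)-1>0$, the domain on which Fubini applies, and the identity theorem in two complex variables. Everything else is substitution.
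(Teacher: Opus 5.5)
Your proposal is correct and follows essentially the same route as the paper. You sum $\sum_j x^{j-1}(\zeta(s)-1)^j$ termwise, justified by the geometric majorant $|x|^{j-1}(\zeta(\Real s)-1)^j$; the paper phrases this as dominated convergence of the partial sums over $n$, and you phrase it as Fubini--Tonelli for the absolutely convergent double series. You then substitute $x\mapsto x-1$ via Theorem~\ref{thm11} for $Q$ and finish with the same direct algebra for the shifted series and the two product identities.
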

\begin{proof}
For $N\in\mathbb N$, let $\hat\Omega(N) := \max\limits_{1\le n\le N} \Omega(n)
= \lfloor \log_2 N \rfloor$.
For $x > 0$, we have
\begin{equation*}
 \sum_{n=1}^N \frac{P(n, x)}{n^s} = \sum_{n=1}^N \sum_{k=1}^{\hat\Omega(N)} x^{k-1} \frac{c_k(n)}{n^s}
 = \sum_{k=1}^{\hat\Omega(N)} x^{k-1} \sum_{n=1}^N \frac{c_k(n)}{n^s}
 = \sum_{k=1}^{\infty} x^{k-1} \sum_{n=1}^N \frac{c_k(n)}{n^s}.
\end{equation*}
Here
\begin{align*}
\left| x^{k-1} \sum_{n=1}^N \frac{c_k(n)}{n^s} \right|
&= |x|^{k-1}\left|\sum_{n=1}^N \frac{c_k(n)}{n^s} \right|
\le|x|^{k-1}\sum_{n=1}^N \frac{c_k(n)}{|n^s|}
\\
&= |x|^{k-1} \sum_{n=1}^N \frac{c_k(n)}{n^{\Real s}}
\le  |x|^{k-1}\,|\zeta(\Real s) - 1|^k,
\end{align*}
which is independent of $N$, and
$$
 \sum_{k=1}^\infty |x|^{k-1}\,|\zeta(\Real s) - 1|^k
= |\zeta(\Real s) - 1|\sum_{k=1}^\infty (|x|\,|\zeta(\Real s) - 1|)^{k-1} < \infty
$$
for $|x| < |\zeta(\Real s) - 1|^{-1}$, and also
\begin{equation*}
\lim_{N \rightarrow\infty} x^{k-1} \sum_{n=1}^N \frac{c_k(n)}{n^s}
= x^{k-1} (\zeta(s) - 1)^k
\end{equation*}
for each $k\in\mathbb N$,
so by the dominated convergence theorem
\begin{equation*}
D_{P(\cdot, x)}(s) = \lim_{N\rightarrow\infty} \sum_{n=1}^N \frac{P(n, x)}{n^s}
= \sum_{k=1}^\infty x^{k-1}\,(\zeta(s) - 1)^k
= \frac {\zeta(s) - 1}{1 - x\,(\zeta(s) - 1)},
\end{equation*}
giving (\ref{eq:DP}).
Hence (\ref{eq:DP1}) follows by a simple calculation. Equations (\ref{eq:DQ})
and (\ref{eq:DQ1}) arise by replacing $x$ in the formulae for $D_P$ by $x - 1$
(see Theorem \ref{thm11}).
The final two identities are a direct consequence of the formulae
(\ref{eq:DP}) -- (\ref{eq:DQ1}).
\end{proof}

\begin{theorem}
\label{thm112}
The arithmetic functions $P(\cdot,x)$ and $Q(\cdot,x)$, where $x \in \C$, obey the sum-over-divisor relations
\be\label{115}
P(n,x)=x\sum_{d\mid n,\ d<n}\left (P(d,x)+\frac{e(d)}{x}\right),
\ee
\be\label{116}
Q(n,x)=(x-1)\sum_{d\mid n,\ d<n}\left (Q(d,x)+\frac{e(d)}{x-1}\right),
\ee
and
\be\label{117}
P(n,x)=\sum_{d\mid n,\ d<n}\left (P(d,x)+e(d)\right)Q\left(\frac{n}{d},x\right).
\ee
\end{theorem}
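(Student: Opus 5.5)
We work throughout in the commutative Dirichlet convolution algebra, viewing $P(\cdot,x)$ and $Q(\cdot,x)$ as arithmetic functions for fixed $x$. For each $n$ the defining sums are finite, so all manipulations are pointwise finite identities and no convergence questions arise. The key observation is that the sum $\sum_{d\mid n,\ d<n} f(d)$ is exactly $(f*(1-e))(n)$. Each of the three relations is therefore a convolution identity, and we can verify it algebraically without Dirichlet series. Alternatively, we could check the corresponding identities of Dirichlet series from Theorem \ref{thm12} on a region of absolute convergence and invoke uniqueness of Dirichlet coefficients. We prefer the algebraic route because it covers all $x\in\C$ at once, including $x=0$ and $x=1$.

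For (\ref{115}), note that $x\,(P(d,x)+e(d)/x)=xP(d,x)+e(d)$. The right-hand side is then $\bigl((xP+e)*(1-e)\bigr)(n)$, and we must show
\[
P=(xP+e)*(1-e).
\]
Since $P=\sum_{j\ge1}x^{j-1}(1-e)^{*j}$, we get $xP*(1-e)=\sum_{j\ge1}x^{j}(1-e)^{*(j+1)}=\sum_{j\ge2}x^{j-1}(1-e)^{*j}$. Adding $e*(1-e)=(1-e)=c_1$ restores the $j=1$ term and gives $P$. This reproduces the recursion $c_j(n)=\sum_{k\mid n,k<n}c_{j-1}(k)$ stated in Section \ref{sec:df}, and it holds for every $x$, including $x=0$, if we read $x\cdot e/x$ as $e$.

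For (\ref{116}), we use Theorem \ref{thm11}: $Q(n,x)=P(n,x-1)$. Substituting $x-1$ for $x$ in (\ref{115}) gives (\ref{116}) directly, again reading $(x-1)\cdot e/(x-1)$ as $e$ when $x=1$.

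For (\ref{117}), the right-hand side is $\bigl((P+e)*Q\bigr)(n)$ minus the term $d=n$. That term is $(P(n,x)+e(n))Q(1,x)$, and $Q(1,x)=\sum_j x^{j-1}c_j^{(-j)}(1)=0$ because $(e-\mu)(1)=0$. So the restriction $d<n$ is harmless, and the claim becomes
\[
P=(P+e)*Q,
\]
which is the coefficient-level form of (\ref{eq:DPQ}). To prove it algebraically, write $u=1-e$, so $P=u*(e-xu)^{-*}$ and $P+e=(e+u-xu)*(e-xu)^{-*}=1*(e-xu)^{-*}$. These are formal power series in $u$, legitimate pointwise because $u^{*j}(n)=0$ for $j>\Omega(n)$. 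Similarly, with $v=e-\mu$, we have $Q=v*(e-xv)^{-*}$, and $v=u*\mu$ since $1*v=u$. Then the identity $P=(P+e)*Q$ reduces to
\[
(e-xv)^{-*}*\mu*1=e-x\cdot 0\ \text{adjusted}\ldots,
\]
more precisely to $e-xu=(e-xu)$ after multiplying out: $(P+e)*Q=1*u*\mu*(e-xu)^{-*}*(e-xv)^{-*}$, and $e-xv=\mu*(1-xu)$ where $1-xu$ means $1-x(1-e)$. We must check that this equals $u*(e-xu)^{-*}$, i.e.\ that $\mu*(1-xu)*(e-xu)^{-*}\cdots$ simplifies. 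This bookkeeping is the main obstacle. If it becomes messy, we will fall back on Theorem \ref{thm12}: (\ref{eq:DPQ}) holds for $\Real s$ large and $|x|$ small, so coefficient uniqueness gives $P(n,x)=((P+e)*Q)(n,x)$ for small $|x|$. Both sides are polynomials in $x$ for fixed $n$, so the identity extends to all $x\in\C$.
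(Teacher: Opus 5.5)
Your arguments for (\ref{115}) and (\ref{116}) are exactly the paper's. They use the identity $\sum_{d\mid n,\,d<n}f(d)=(f*(1-e))(n)$, the telescoping $(xP+e)*(1-e)=P$, and the substitution $x\mapsto x-1$ via Theorem~\ref{thm11}. Your fallback for (\ref{117}) is also the paper's route: read off coefficients from (\ref{eq:DPQ}), then use $Q(1,x)=0$. You also supply a detail the paper leaves implicit. The product rule and coefficient uniqueness only apply where both Dirichlet series converge absolutely, and polynomiality in $x$ then extends the identity to all $x\in\C$. Taken with its fallback, the proposal is a correct proof.

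The algebraic route you abandoned stalled only because of a slip, not for any structural reason. Since $e+u=1$, we have $e+u-xu=1-xu$, not $1$, so
\[
P+e=(1-xu)*(e-xu)^{-*},
\]
where $1-xu$ is the function equal to $1$ at $n=1$ and to $1-x$ elsewhere. Your formula $1*(e-xu)^{-*}$ would have Dirichlet series $\zeta(s)/(1+x-x\zeta(s))$, which contradicts (\ref{eq:DP1}).

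With this correction, your own observations $v=u*\mu$ and $e-xv=\mu*(1-xu)$ give $(e-xv)^{-*}=1*(1-xu)^{-*}$. The inverse exists because $(1-xu)(1)=1$. Hence $Q=v*(e-xv)^{-*}=u*\mu*1*(1-xu)^{-*}=u*(1-xu)^{-*}$, and
\[
(P+e)*Q=(1-xu)*(e-xu)^{-*}*u*(1-xu)^{-*}=u*(e-xu)^{-*}=P.
\]
This settles (\ref{117}) for all $x\in\C$ at once by pure convolution algebra. It needs no appeal to Theorem~\ref{thm12}, analytic continuation, or convergence regions, so it is the cleaner proof to present. Replace the unfinished display after ``reduces to'' with this computation.
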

\begin{proof}
By the definition of Dirichlet convolution, we see that for every arithmetic
function $f$,
\begin{equation*}
\sum_{d | n,\ d < n} f(d) = (f * (1 - e))(n) \qquad (n\in\mathbb N).
\end{equation*}
Hence the right-hand side of equation (\ref{115}), abstracting from the variable
$n$, is equal to
\begin{align*}
 (x\, P(\cdot, x) + e) * (1 - e) &= x\,P(\cdot, x) * (1 - e) + e * (1 - e)
\\
 &= x \sum_{j=1}^\infty x^{j-1}\,c_j * (1 - e) + 1 - e
\\
 &= \sum_{j=1}^\infty x^j\,(1 - e)^{*(j+1)} + 1 - e
 = \sum_{k=2}^\infty x^{k-1}\,(1 - e)^{*k} + 1 - e
\\
 &= \sum_{k=1}^\infty x^{k-1}\,(1 - e)^{*k}
 = P(\cdot, x).
\end{align*}
Equation (\ref{116}) now follows by Theorem \ref{thm11} as
\begin{align*}
 Q(\cdot, x) &= P(\cdot, x - 1)
 = ((x - 1) P(\cdot, x - 1) + e) * (1 - e)
\\
 &= ((x - 1) Q(\cdot, x) + e) * (1 - e).
\end{align*}
Finally, as the Dirichlet series of a convolution of arithmetic functions
is the product of their separate Dirichlet series, we
deduce from equation (\ref{eq:DPQ}) that
\begin{align*}
P(n, x) &= \left((P(\cdot, x) + e) * Q(\cdot, x)\right)(n)
= \sum_{d | n} (P(d, x) + e(d))\,Q\left(\frac n d, x\right)
\end{align*}
for all $n\in\mathbb N$, giving equation (\ref{117}) since
$Q(1, x) = 0$.
\end{proof}

\begin{remark}\label{corcom}
The polynomial sequence $(|P(n,-2)|)_{n\in\mathbb N}$ gives the number of all ordered factorisations of $n$ into squarefree numbers greater than 1 (cf.\ OEIS A050328);
more precisely, for $n\in\mathbb N$,
\begin{align*}
 P(n, -2) &= Q(n, -1)
 = \sum_{j=1}^\infty (-1)^{j-1} c_j^{(-j)}(n)
 = \sum_{j=1}^\infty (-1)^{j-1} (-1)^j \lambda(n)\,b_j(n)
\\
 &= (-1)^{\Omega(n)+1} \sum_{j=1}^\infty b_j(n).
\end{align*}
Taking $x = -1$, we find
\begin{equation*}
P(n, -1) = Q(n, 0) = c_1^{(-1)}(n) = (e - \mu)(n),
\end{equation*}
so, except at $n = 1$, its values are the negative of the M\"obius function
(cf.\ the sequence OEIS A008683).
The sequence $(P(n,0))_{n\in\mathbb N}$ is the sequence with every term (except for the leading 0) equal to 1 (cf.\ OEIS A000012),
since $P(n, 0) = c_1(n) = (1 - e)(n)$. Interestingly, by Theorem \ref{thm11}
this implies that
\begin{equation*}
 Q(n, 1) = \sum_{j=1}^\infty c_j^{(-j)}(n) = 1 \qquad (n \in \mathbb N, n \ge 2)
\end{equation*}
(cf.\ \cite[Eq. (7) with $r = 1$]{mcl4}).
Similarly, the sequence $(P(n,1))_{n \in\mathbb N}$ corresponds to the number of ordered factorisations of $n$ into integers greater than 1 (cf.\ OEIS A074206), which accords with the total number of perfect partitions of $n$ (see OEIS A002033).

Regarding $P(n, r)$ with $r \ge 2$, we note that its Dirichlet series (\ref{eq:DP}) coincides with that given in the Online Encyclopedia of Integer Sequences for the counts of ``ordered factorisations with increasing levels of parenthesis''
(e.g. see for parenthesis levels $1,2$ and $3$ respectively, the OEIS A050354, A050356 and A050358).
It is also remarked there that
the parenthesis level $r$ sequence $(a(n))_{n\in\mathbb N}$ satisfies the
recurrence relation
\[
a(n)=r\sum_{d\mid n,\ d<n}a(d)\quad (n\geq 2), \qquad a(1)=\frac{1}{r},
\]
which is consistent with Equation (\ref{115}) setting
$a(n) = P(n, r) + e(n)/r$,
and further that the partial sums of terms has asymptotic
\[
\sum_{n=1}^m \left (P(n,r)+e(n)\right )\asymp \frac{m^k}{r^2\, k\,\zeta^\prime(k)},
\]
where $k$ is such that $\zeta(k)=1+\frac{1}{k}$
(cf.\ the definition of $r_m$ in Theorem \ref{thm:DN} below).
\end{remark}

\section{Divisor function polynomials for squarefree integers}\label{sec:Bell}
For the statement of results in this section we first introduce some special sequences of numbers and related polynomials.

\begin{definition}
The {\it Bernoulli numbers\/} $B_n$ form a special sequence of rational numbers which are related to the Riemann zeta function at even integer arguments. The sequence begins $(B_0, B_1, B_2, B_3, B_4, B_5, B_6, \dots) = (1,-\frac{1}{2},\frac{1}{6},0,-\frac{1}{30},0,\frac{1}{42},\dots)$, defined by the generating function \cite[p.\ 48]{comtet}
\[
\frac{t}{e^t-1}=\sum_{n=0}^\infty \frac{t^n B_n}{n!};
\]
there is also the historic variant \cite[p.\ 107]{conway}
\[
\frac{t e^t}{e^t-1}=\sum_{n=0}^\infty \frac{t^n B_n^+}{n!},
\]
where $B_n^+=B_n$ for $n\neq 1$, $B_1^+=\frac{1}{2}$.

The {\it ordered Bell numbers\/} (or {\it Fubini numbers\/}) $F_n$,  and {\it ordered Bell polynomials\/} $F_n(x)$ are defined by
\[
F_n=\sum_{k=1}^nk!S(nk),\quad \text{and}\quad F_n(x)=\sum_{k=1}^nk!S(n,k)x^k,
\]
respectively,
where $S(n,k)$ are the Stirling numbers of the second kind. The ordered Bell numbers count the number of weak orderings or equivalently the number of distinct \emph{ordered} partitions of a set of $n$ elements \cite{tanny}.

Lastly, we define the {\it signed ordered Bell numbers\/} $F_n^-$ and {\it signed ordered Bell polynomials\/} $F_n^-(x)$ as
\[
F_n^-=\sum_{k=1}^n(-1)^{n-k}k!S(n,k)\quad \text{and}\quad F_n^-(x)=\sum_{k=1}^n(-1)^{n-k}k!S(n,k)x^k.
\]
\end{definition}

\smallskip\noindent
We now relate the divisor function polynomials of Section \ref{sec:dfp} specifically for squarefree numbers, to the
(standard and signed) Bell polynomials.

\begin{theorem}\label{thm115}
Let $n$ be a squarefree number with $\omega(n)=\Omega(n)$ distinct prime factors. Then
\[
x\,P(n,x)=F_{\omega(n)}(x),\qquad x\,Q(n,x)=F^-_{\omega(n)}(x).
\]
\end{theorem}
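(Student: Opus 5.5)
For squarefree $n$ with $m=\omega(n)$ prime factors, I would first show that $c_k(n)=k!\,S(m,k)$. The ordered factorisations $n=n_1\cdots n_k$ with every $n_i>1$ correspond bijectively to ordered partitions of the set of the $m$ prime divisors of $n$ into $k$ non-empty blocks. The factor $n_i$ is the product of the primes in the $i$-th block. This bijection works because every divisor of a squarefree number is determined by its set of prime factors, and because $c_k=(1-e)^{*k}$ counts exactly these factorisations. The number of ordered partitions into $k$ blocks is $k!\,S(m,k)$. Consequently
\[
x\,P(n,x)=\sum_{k\ge1}x^k c_k(n)=\sum_{k=1}^m k!\,S(m,k)\,x^k=F_m(x).
\]
For $n=1$ both sides vanish, provided we adopt the convention that $F_0(x)$ is the empty sum, i.e. $0$. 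I would note this convention explicitly.

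For $Q$ there are two routes. The direct route uses the interpretation quoted in Section \ref{sec:df}: $c_k^{(-k)}(n)=(-1)^{k+\Omega(n)}b_k(n)$. For squarefree $n$, every factor of an ordered factorisation is automatically squarefree, so $b_k(n)=c_k(n)=k!\,S(m,k)$. To make this rigorous I would use Theorem \ref{thm:Db}: $b_k=(|\mu|-e)^{*k}$, and $|\mu|-e$ agrees with $1-e$ on every divisor of a squarefree $n$, so the two convolution powers coincide at $n$. Then
\[
x\,Q(n,x)=\sum_k x^k(-1)^{k+m}k!\,S(m,k)=F_m^-(x),
\]
since $(-1)^{k+m}=(-1)^{m-k}$. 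Here the sign $(-1)^k$ in equation (\ref{eq:bdef}) is exactly what produces the alternation.

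As a cross-check, one can instead use Theorem \ref{thm11}, $Q(n,x)=P(n,x-1)$, which gives $x\,Q(n,x)=\frac{x}{x-1}F_m(x-1)$. Identifying this with $F_m^-(x)$ would require a Stirling-number identity. That is harder, so I would use the direct argument above.

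The main obstacle is just being careful about signs and the edge case $n=1$. The substantive step is the bijection between factorisations and ordered set partitions, and that is routine.
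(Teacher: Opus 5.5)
Your proposal is correct and follows essentially the same route as the paper. You biject the non-trivial ordered factorisations of squarefree $n$ with ordered set partitions of its prime divisors to get $c_k(n)=b_k(n)=k!\,S(\omega(n),k)$, and then the sign relation $c_k^{(-k)}=(-1)^k\lambda\,b_k$ from (\ref{eq:bdef}) produces $F^-_{\omega(n)}$. The paper justifies $b_k(n)=c_k(n)$ by noting that non-trivial divisors of a squarefree number are squarefree, and your argument via $b_k=(|\mu|-e)^{*k}$ (Theorem \ref{thm:Db}) is a slightly more formal version of that step.
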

\begin{proof}
Every non-trivial divisor of the squarefree number $n$ is also squarefree, so the
number of non-trivial factorisations of $n$ is equal to the number of
non-trivial squarefree factorisations, i.e.\ $b_j(n) = c_j(n)$.
Moreover, factorising $n$ into $j$ non-trivial factors is equivalent to
assigning its $\Omega(n) = \omega(n)$ prime factors to $j$ non-empty sets,
with each non-trivial factor being the (commutative) product of the primes in
one of the sets. Hence
\begin{equation}\label{eq:sff}
c_j(n) = |c_j^{(-j)}(n)| = b_j(n) = S(\omega(n), j)\,j!.
\end{equation}
It follows that
\begin{equation*}
x\,P(n, x) = \sum_{j=1}^{\omega(n)} x^j\,c_j(n)
 = \sum_{j=1}^{\omega(n)} x^j\,j!\,S(\omega(n), j)
 = F_{\omega(n)}(x).
\end{equation*}
Similarly,
\begin{align*}
x\,Q(n, x) &= \sum_{j=1}^{\omega(n)} x^j\,c_j^{(-j)}(n)
 = \sum_{j=1}^{\omega(n)} x^j\,(-1)^{j+\omega(n)}\,b_j(n)
\\
 &= \sum_{j=1}^{\omega(n)} x^j\,(-1)^{j+\omega(n)}\,S(\omega(n), j)\,j!
 = F_{\omega(n)}^-(x).
\qedhere
\end{align*}
\end{proof}
\begin{remark}
We note that, by equations (\ref{enum22}) and (\ref{eq:sff}), the number of $m$-part sum systems when $n$ is squarefree is given by
\[
 {\mathcal N}_m(n) =
m!\sum_{L=0}^{\omega{(n)}}(-1)^{\omega(n)+L} \,L!\, S(L,m)\, S(\omega(n),L).
\]
\end{remark}
\medskip\noindent
Using a result of Tanny \cite{tanny}, Kellner observed that
$f_n(x) = F_n(x)/x$ has the symmetry
$f_n(x) = (-1)^{n-1}\,f_n(-1-x)$ \cite[Corollary 3.3]{kellner}, and by Theorem \ref{thm115} we can conclude
that
$P(n, x) = (-1)^{\omega(n)-1}\,P(n, -1-x)$.
Further, Theorem \ref{thm11} gives
\begin{align*}
 Q(n, x) &= P(n, x-1) % = (-1)^{\omega(n)-1} P(n, -1-(x-1))
 = (-1)^{\omega(n)-1}\,P(n, -x)
 = (-1)^{\omega(n)-1}\,Q(n, 1-x).
\end{align*}
Shifting the variable $x$ by $\pm\frac 12$, respectively, we hence obtain
the following symmetry statements.

\begin{corollary}
Let $n$ be a squarefree natural number. Then
\begin{align*}
P\left(n,x-\frac{1}{2}\right)&=(-1)^{\omega(n)-1}P\left(n,-x-\frac{1}{2}\right),
\\
 Q\left(n,x+\frac{1}{2}\right)&=(-1)^{\omega(n)-1}Q\left(n,-x+\frac{1}{2}\right) \qquad (x \in \mathbb C).
\end{align*}
In particular, the zeros of $P(n, \cdot)$ and the zeros of $Q(n, \cdot)$ are
point symmetric around $\frac 12$ and $-\frac 12$, respectively.
\end{corollary}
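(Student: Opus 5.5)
The plan is to start from the two symmetry identities derived just before the statement. These are
\[
P(n,x) = (-1)^{\omega(n)-1}P(n,-1-x)
\]
from Kellner's symmetry of $F_n(x)/x$ together with Theorem \ref{thm115}, and
\[
Q(n,x)=(-1)^{\omega(n)-1}Q(n,1-x)
\]
from combining that with Theorem \ref{thm11}. Both hold for all $x\in\mathbb C$, since both sides are polynomials in $x$. One point needs care: Theorem \ref{thm115} gives $x\,P(n,x)=F_{\omega(n)}(x)$, so $P(n,x)=f_{\omega(n)}(x)$ for $x\neq 0$. Two polynomials that agree on $\mathbb C\setminus\{0\}$ agree everywhere, so Kellner's identity transfers to $P(n,\cdot)$ on all of $\mathbb C$.

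For the first identity, substitute $x-\frac12$ for $x$ in the $P$-symmetry. Then $-1-(x-\frac12) = -x-\frac12$, which is exactly the claimed relation.

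For the second identity, substitute $x+\frac12$ for $x$ in the $Q$-symmetry. Then $1-(x+\frac12)=-x+\frac12$, which again gives the claim.

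For the statement about zeros, suppose $z$ is a zero of $P(n,\cdot)$ and write $z=w-\frac12$. The first identity shows that $-w-\frac12$ is also a zero. The map $w-\frac12\mapsto -w-\frac12$ is the point reflection of $\mathbb C$ through $-\frac12$. Likewise, for $Q(n,\cdot)$ the second identity shows that the zero set is invariant under $w+\frac12\mapsto -w+\frac12$, which is the reflection through $\frac12$.

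The centres come out opposite to the statement: the zeros of $P$ are symmetric about $-\frac12$ and those of $Q$ about $\frac12$. This is consistent with $Q(n,x)=P(n,x-1)$, which shifts the zero set of $P$ by $+1$. So the displayed identities hold as written, but the closing sentence appears to have its two centres swapped, and I would state the zero symmetry with these corrected centres.

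Multiplicities are preserved as well. Each identity says that $P(n,\cdot)$, respectively $Q(n,\cdot)$, equals $\pm$ its own composition with an affine involution, so a zero and its reflection have the same order.

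The only genuine issue is therefore the transfer of Kellner's symmetry from $f_{\omega(n)}$ to $P(n,\cdot)$ at $x=0$, which the polynomial-identity argument above settles. Everything else is substitution.
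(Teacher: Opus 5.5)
Your proof is correct and follows the paper's route: the symmetry $P(n,x)=(-1)^{\omega(n)-1}P(n,-1-x)$ from Kellner's result via Theorem~\ref{thm115}, its transfer to $Q(n,x)=(-1)^{\omega(n)-1}Q(n,1-x)$ via Theorem~\ref{thm11}, and then a shift of the variable by $\frac12$. You are also right that the centres in the final sentence are interchanged---the displayed identities make the zeros of $P(n,\cdot)$ symmetric about $-\frac12$ and those of $Q(n,\cdot)$ about $\frac12$, which is exactly what the paper's subsequent Example exhibits.
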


\begin{example}
The roots of $P(210, \cdot)$ and of $Q(210, \cdot)$ are
\begin{equation*}
x = -\frac 12 \pm \frac 12 \sqrt{\frac 12 \pm \frac 12 \sqrt{\frac 7{15}}}
\qquad\textrm{and}\qquad
x = \frac 12 \pm \frac 12 \sqrt{\frac 12 \pm \frac 12 \sqrt{\frac 7{15}}},
\end{equation*}
respectively.
(210 is the product of the first four primes.)
\end{example}
\medskip\noindent
We conclude this section with an observation on the connection between the
Bernoulli numbers and the non-trivial and balanced divisor functions.

\begin{theorem}
\label{thm11sf}
Let $n$ be a squarefree integer, $n \ge 2$. Then
\begin{equation*}
B_{\omega(n)} = \sum_{k=1}^{\omega(n)} \frac{(-1)^k\,c_k(n)}{k+1}
= \int_{-1}^0 x\,P(n, x)\,d x
= \int_{-1}^0 F_{\omega(n)}(x)\,d x
\end{equation*}
and
\begin{equation*}
B_{\omega(n)}^+ = \sum_{k=1}^{\omega(n)} \frac{c_k^{(-k)}(n)}{k+1}
= \int_0^1 x\,Q(n, x)\,d x
= \int_0^1 F_{\omega(n)}^-(x)\,d x.
\end{equation*}
%Let $m$ be a squarefree positive integer with $\omega(m)=t\geq 2$ distinct prime factors, so that $\Omega(m)=\omega(m)=t$,
%and
%$c_k=|c_k^{(-k)}|=k!S(t,k)$. Then
%with regard the Bernoulli numbers $B_n$, we have
%\[
%\mathop{B_t}_{t> 1}=\int_{0}^1 Q(m,x){\rm dx}
%=\sum_{k=0}^t \frac{(-1)^{t+k}k!S(t,k)}{k+1},
%\]
%so that the $t$-th Bernoulli number is given by
%\[
%B_t=\sum _{k=0}^{t}(-1)^{t+k} \frac{|c_k^{-k}(n)|}{k+1}=\sum _{k=0}^{t}(-1)^{t+k} \frac{b_k(n)}{k+1}
%\]
%\[
%B_t^-
%\sum_{k=0}^t \frac{c_k^{(-k)}(n)}{k+1}.
%\]
\end{theorem}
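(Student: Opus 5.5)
The plan is to reduce everything to the classical identity $B_t=\sum_{k=0}^t (-1)^k k!\,S(t,k)/(k+1)$, with $t=\omega(n)$, and then to read off the integral forms from Theorem \ref{thm115}. This identity holds with the convention $B_1=-\frac12$. The $k=0$ term vanishes for $t\ge1$ because $S(t,0)=0$, and that is why $n\ge2$ is assumed. By equation (\ref{eq:sff}) we have $c_k(n)=k!\,S(t,k)$, so the first sum in the statement is exactly this classical sum.

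To prove the classical identity, I would work with generating functions. Since $\sum_{t} S(t,k)\,u^t/t!=(e^u-1)^k/k!$, we get
\[
\sum_{t\ge0}\frac{u^t}{t!}\sum_{k}\frac{(-1)^k k!\,S(t,k)}{k+1}=\sum_{k\ge0}\frac{(1-e^u)^k}{k+1}=\frac{-\log(e^u)}{1-e^u}=\frac{u}{e^u-1},
\]
which uses $\sum_k y^k/(k+1)=-\log(1-y)/y$ with $y=1-e^u$. The sums are formal power series in $u$, and $y$ has no constant term, so everything is legitimate. Comparing coefficients with the definition of $B_t$ proves the identity.

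The integral forms come next. Theorem \ref{thm115} gives $x\,P(n,x)=F_t(x)=\sum_k k!\,S(t,k)\,x^k$. Using $\int_{-1}^0 x^k\,dx=(-1)^k/(k+1)$ and integrating term by term, we obtain $\int_{-1}^0 F_t(x)\,dx=\sum_k (-1)^k k!\,S(t,k)/(k+1)$, which completes the first chain of equalities.

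For the second chain, Theorem \ref{thm115} gives $x\,Q(n,x)=F_t^-(x)$, and $\int_0^1 x^k\,dx=1/(k+1)$. Hence $\int_0^1 F_t^-(x)\,dx=\sum_k (-1)^{t-k}k!\,S(t,k)/(k+1)$. This equals $(-1)^t\sum_k(-1)^k k!\,S(t,k)/(k+1)=(-1)^tB_t$. It also equals $\sum_k c_k^{(-k)}(n)/(k+1)$, because $c_k^{(-k)}(n)=(-1)^{k+t}k!\,S(t,k)$ by the proof of Theorem \ref{thm115}.

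It remains to check that $(-1)^tB_t=B_t^+$. For $t=1$ we have $-B_1=\frac12=B_1^+$. For odd $t\ge3$ both $B_t$ and $B_t^+$ vanish. For even $t$ the two sides agree trivially. Alternatively, one can substitute $x\mapsto -x$ in the first integral, using $F_t^-(x)=(-1)^tF_t(-x)$, and use $B_t^+=(-1)^tB_t$, which follows from the fact that $te^t/(e^t-1)$ equals $t/(e^t-1)$ evaluated at $-t$.

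The main obstacle is the generating-function verification of the classical Stirling–Bernoulli identity. The remaining steps are bookkeeping with signs.
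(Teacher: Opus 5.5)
Your proposal is correct and follows the paper's proof step for step: reduce to $B_t=\sum_{k\ge1}(-1)^k k!\,S(t,k)/(k+1)$ via $c_k(n)=k!\,S(t,k)$, integrate term by term using Theorem \ref{thm115}, and convert the sign to obtain $B_t^+$. The only differences are two. First, you prove this Stirling--Bernoulli identity by an exponential generating function computation, whereas the paper simply quotes it from Kellner's Proposition 2.1. Second, your alternative derivation of $B_t^+=(-1)^tB_t$ from $te^t/(e^t-1)=(-t)/(e^{-t}-1)$ avoids the vanishing of the odd Bernoulli numbers, which the paper invokes.
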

\begin{proof}
By \cite[Proposition 2.1]{kellner}
the Bernoulli numbers satisfy the identity
\begin{equation*}
B_j = \sum_{k=1}^j\frac{(-1)^k k!S(j,k)}{k+1} \qquad (j\geq 1).
\end{equation*}
Combining this formula with equation (\ref{eq:sff}),
we find
\begin{align*}
B_{\omega(n)} &= \sum_{k=1}^{\omega(n)} \frac{(-1)^k k!\,S(\omega(n), k)}{k+1}
= \sum_{k=1}^{\omega(n)} \frac{(-1)^k c_k(n)}{k+1}
\\
&= \sum_{k=1}^{\omega(n)} c_k(n) \int_{-1}^0 x^k\,d x
= \int_{-1}^0 \sum_{k=1}^{\omega(n)} x^k c_k(n)\,d x
= \int_{-1}^0 x\,P(n, x)\,d x.
\end{align*}
Similarly using equation (\ref{eq:sff}) and
$c_k^{(-k)}(n) = (-1)^{k+\omega(n)}\,|c_k^{(-k)}(n)|$, we also have
\begin{equation}\label{eq:Bwn}
B_{\omega(n)} = \sum_{k=1}^{\omega(n)} \frac{(-1)^k\,(-1)^{k+\omega(n)} c_k^{(-k)}(n)}{k+1}
= (-1)^{\omega(n)} \sum_{k=1}^{\omega(n)} \frac{c_k^{(-k)}(n)}{k+1}.
\end{equation}
Since $B_1 = - B_1^+$ and $B_j = 0$ for all odd $j \ge 3$ \cite[p.48 {[14d]}]{comtet}, it follows that
\begin{align*}
B_{\omega(n)}^+ &= \sum_{k=1}^{\omega(n)} \frac{c_k^{(-k)}(n)}{k+1}
\\
&= \sum_{k=1}^{\omega(n)} c_k^{(-k)}(n) \int_0^1 x^k\,d x
= \int_0^1 \sum_{k=1}^{\omega(n)} x^k\,c_k^{(-k)}(n)\,d x
= \int_0^1 x\,Q(n, x)\,d x.
\end{align*}
The equality with integrals over the (standard and signed) ordered Bell
polynomials follows by Theorem \ref{thm115}.
\end{proof}
\begin{remark}
The above theorem also shows the identities
\begin{equation*}
\sum_{k=1}^{\omega(n)} \frac{(-1)^k c_k(n)}{k+1} = 0,
\qquad
\sum_{k=1}^{\omega(n)} \frac {c_k^{(-k)}(n)}{k+1} = 0
\end{equation*}
whenever $n$ is the product of an odd number of distinct primes, but not a
prime itself.
\end{remark}
\medskip\noindent
In view of equation (\ref{eq:Bwn}) and the well-known identity of the
Riemann zeta function at even integer arguments \cite{mcl5}
\[
\zeta(2t)=\frac{(-1)^{t+1}2^{2t-1}\pi^{2t}B_{2t}}{2t!}.
\]
we obtain the following statement.
\begin{corollary}
For every integer $t \ge 1$ and every squarefree natural number $n$ such that $\omega(n) =\Omega(n)= 2 t$, we have
%For the Riemann zeta function at even integer arguments, and $n$ a squarefree integer with $\Omega(n)=\omega(n)=t$, we have that
\[
\zeta(2t)=\frac{(-1)^{t+1}2^{2t-1}\pi^{2t}}{2t!}\sum_{k=0}^{2t} \frac{c_k^{(-k)}(n)}{k+1}.
\]
\end{corollary}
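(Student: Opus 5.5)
The plan is to combine equation (\ref{eq:Bwn}) from the proof of Theorem \ref{thm11sf} with the classical evaluation of $\zeta(2t)$ quoted just before the statement. Fix $t\ge 1$ and a squarefree $n$ with $\omega(n)=\Omega(n)=2t$; in particular $n\ge 2$, so Theorem \ref{thm11sf} and its proof apply. Since $\omega(n)$ is even, the sign factor $(-1)^{\omega(n)}$ in (\ref{eq:Bwn}) equals $1$. This gives
\[
B_{2t} = \sum_{k=1}^{2t} \frac{c_k^{(-k)}(n)}{k+1}.
\]
Because $2t\ge 2$, we have $B_{2t}=B_{2t}^+$, so the same identity also follows directly from the second part of Theorem \ref{thm11sf}. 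Either route is fine, but using (\ref{eq:Bwn}) avoids any discussion of $B_1$ versus $B_1^+$.

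Next I will reconcile the summation range. The statement sums from $k=0$, whereas (\ref{eq:Bwn}) sums from $k=1$. The $k=0$ term is $c_0^{(0)}(n)/1 = e(n)$, since $c_0^{(0)}=(e-\mu)^{*0}=e$. This term vanishes because $n\ge 2$. Hence
\[
\sum_{k=0}^{2t}\frac{c_k^{(-k)}(n)}{k+1}=B_{2t}.
\]

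Finally I will substitute this expression for $B_{2t}$ into
\[
\zeta(2t)=\frac{(-1)^{t+1}2^{2t-1}\pi^{2t}B_{2t}}{(2t)!},
\]
which yields the claimed formula. Here the denominator written $2t!$ in the paper is to be read as $(2t)!$.

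There is no real obstacle in this argument. The only points needing care are that the parity of $\omega(n)$ kills the sign in (\ref{eq:Bwn}), and that the extra $k=0$ term is zero because $n\ne 1$.
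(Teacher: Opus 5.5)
Your proposal is correct and follows the paper's own argument. Like the paper, you specialise equation (\ref{eq:Bwn}) to even $\omega(n)=2t$ and substitute $B_{2t}$ into the classical formula for $\zeta(2t)$. Your two extra checks are points the paper leaves implicit: that the $k=0$ term $c_0^{(0)}(n)=e(n)$ vanishes for $n\ge 2$, and that $2t!$ must be read as $(2t)!$.
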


\section{Dirichlet series of the sum system counting function}\label{sec:DN}
In this section we consider the Dirichlet series for the counting function $\mathcal{N}_m$ given in equation (\ref{enum22}).
We first observe that a different approach to the same counting problem
yields a different formula for the same number \cite{Hylock}, but their
equivalence follows from Theorem \ref{thm11}.
\begin{theorem}\label{thm:equiv}
Let $m\in\mathbb N$. Then
\begin{equation*}
\sum_{L=1}^{\infty} m!\,S(L, m)\,c_L^{(-L)} = \sum_{L=1}^{\infty} m!\,S(L-1, m-1)\,c_L.
\end{equation*}
\end{theorem}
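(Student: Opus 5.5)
We need to show $\sum_L S(L,m)c_L^{(-L)} = \sum_L S(L-1,m-1)c_L$. The plan is to encode both sides through the polynomial generating functions $P$ and $Q$ and then use Theorem \ref{thm11}.

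\textbf{Step 1: write the left side via $Q$.} Use the falling-factorial expansion
\[
m!\,S(L,m)=\sum_{i=0}^m(-1)^{m-i}\binom{m}{i} i^L .
\]
This gives
\[
\sum_L m!\,S(L,m)\,c_L^{(-L)}(n)=\sum_{i=0}^m (-1)^{m-i}\binom mi\, i\,Q(n,i).
\]

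\textbf{Step 2: write the right side via $P$.} Use the analogous expansion
\[
(m-1)!\,S(L-1,m-1)=\sum_{k=0}^{m-1}(-1)^{m-1-k}\binom{m-1}{k}k^{L-1}.
\]
This gives
\[
\sum_L m!\,S(L-1,m-1)\,c_L(n)=m\sum_{k=0}^{m-1}(-1)^{m-1-k}\binom{m-1}{k}P(n,k),
\]
with the convention $0^0=1$, so the $k=0$ term is $P(n,0)=c_1(n)$.

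\textbf{Step 3: compare using Theorem \ref{thm11}.} On the left, the $i=0$ term vanishes. Substitute $Q(n,i)=P(n,i-1)$ and set $k=i-1$. Then use
\[
i\binom mi=m\binom{m-1}{i-1}.
\]
The left side becomes
\[
m\sum_{k=0}^{m-1}(-1)^{m-1-k}\binom{m-1}{k}P(n,k),
\]
which is exactly the right side. All sums are finite pointwise in $n$, so the rearrangements are harmless.

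\textbf{Expected obstacle.} The only delicate point is the bookkeeping with $0^0$ and the $L=1$ terms. For $m=1$ the right side is $S(L-1,0)=\delta_{L,1}$, giving $c_1$, and the left side is $\sum_L c_L^{(-L)}=Q(n,1)=P(n,0)=c_1$. This matches the remark following Theorem \ref{thm112}.
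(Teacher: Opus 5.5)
Your proof is correct and follows essentially the same route as the paper: expand the Stirling numbers via $m!\,S(L,m)=\sum_{k}(-1)^{m-k}\binom{m}{k}k^L$, recognise $P(\cdot,k)$ and $Q(\cdot,j)$ at integer arguments, apply Theorem~\ref{thm11}, and use $j\binom{m}{j}=m\binom{m-1}{j-1}$. The only cosmetic difference is that the paper runs a single chain of equalities from the right-hand side to the left-hand side, whereas you reduce both sides to the common middle expression $m\sum_{k=0}^{m-1}(-1)^{m-1-k}\binom{m-1}{k}P(\cdot,k)$.
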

\begin{remark}
The series in this theorem are pointwise finite sums, as $c_L^{(-L)}(n) = 0 = c_L(n)$ whenever $L > \Omega(n)$.
As a result of the theorem, we also have
\begin{equation}\label{eq:Hylock}
\mathcal N_m(n) = \sum_{L=1}^{\Omega(n)} m!\,S(L-1, m-1)\,c_L(n)
\end{equation}
for $n \ge 2$;
although superficially similar to equation (\ref{enum22}), this expression is
more convenient to work with as all terms in the sum are non-negative.
\end{remark}
\begin{proof}[Proof of Theorem \ref{thm:equiv}.]
Starting from the right-hand side and the identity
\begin{equation*}
m!\,S(L, m) = \sum_{k=0}^m (-1)^{m-k} {m\choose k} k^L
\end{equation*}
(see \cite[Theorem 5.1.A]{comtet}),
we find
\begin{align*}
\sum_{L=1}^\infty & m!\,S(L-1, m-1)\,c_L =
\sum_{L=1}^\infty  m \sum_{k=0}^{m-1} (-1)^{m-1-k} {m-1 \choose k} k^{L-1} c_L
\\
&= m \sum_{j=1}^m (-1)^{m-j} {m-1 \choose j-1} \sum_{L=1}^\infty (j-1)^{L-1} c_L
\\
&= m \sum_{j=1}^m (-1)^{m-j} {m-1 \choose j-1}\,P(\cdot, j-1)\\
&= m \sum_{j=1}^m (-1)^{m-j} {m-1 \choose j-1}\,Q(\cdot, j)
\\
&= m \sum_{j=1}^m (-1)^{m-j} {m-1 \choose j-1} \sum_{L=1}^\infty j^{L-1} c_L^{(-L)}
\\
&= \sum_{L=1}^\infty \sum_{j=1}^m (-1)^{m-j}\,\frac{(m-1)!}{(m-j)!\,(j-1)!}\,j^{L-1}\,c_L^{(-L)}
\\
&= \sum_{L=1}^\infty \sum_{j=0}^m (-1)^{m-j} {m\choose j} j^L c_L^{(-L)}
= \sum_{L=1}^\infty m!\,S(L, m)\,c_L^{(-L)}
\end{align*}
using Theorem \ref{thm11}.
\end{proof}
\begin{theorem}\label{thm:DN}
For natural numbers $m,n$, $m\ge 2$, the Dirichlet series for the arithmetic function $\mathcal{N}_m$, which counts the number of $m$-part sum systems, is given by
\begin{equation}\label{eq:DN}
\sum_{n=1}^\infty \frac{\mathcal{N}_m(n)}{n^s}
= \sum_{L=1}^\infty m!\,S(L-1, m-1)\,(\zeta(s) - 1)^L
\end{equation}
for $s\in\mathbb C$, $\Real s > 1$, such that %$|\zeta(s) - 1| < 1$;
$\zeta(\Real s) < \frac m{m-1}=1+\frac{1}{m-1}$.
The latter condition is satisfied whenever $\Real s > r_m$, with
the unique real number $r_m > 1$ such that $\zeta(r_m) = \frac m{m-1}$.

For $m=2$, we have $r_2 \approx 1.728649\dots$; for $m\ge 3$,
\begin{equation*}
\frac{\log (m-1)}{\log 2} < r_m < 1 + \frac{\log (m-1)}{\log\frac 3 2},
\end{equation*}
so $r_m$  is of order $\log m$ for large values of $m$.
\end{theorem}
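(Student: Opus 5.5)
The plan is to start from the nonnegative representation (\ref{eq:Hylock}) of Theorem \ref{thm:equiv}, $\mathcal N_m(n) = \sum_{L\ge1} m!\,S(L-1,m-1)\,c_L(n)$. This holds for $n\ge 2$; for $n=1$ both sides vanish because $c_L(1)=0$ for $L\ge1$ and $\mathcal N_m(1)$ does not contribute in the counting sense, and I will check this convention against (\ref{enum22}). Then I will interchange the summations over $n$ and $L$ exactly as in the proof of Theorem \ref{thm12}. Since all terms are nonnegative, I can first take $s=\sigma$ real. Tonelli gives
\[
\sum_n \frac{\mathcal N_m(n)}{n^\sigma} = \sum_L m!\,S(L-1,m-1)\,(\zeta(\sigma)-1)^L .
\]
For complex $s$ with $\Real s=\sigma$, I will bound $|\sum_n c_L(n)n^{-s}| \le (\zeta(\sigma)-1)^L$ and apply dominated convergence. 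This reduces the whole question to convergence of the majorant series.

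For convergence, I will use the generating function $\sum_{L\ge m} S(L-1,m-1)\,y^{L} = y\prod_{k=1}^{m-1} \frac{y}{1-ky}$. Its radius of convergence is $1/(m-1)$, coming from the nearest pole. Equivalently, $S(L-1,m-1)\sim (m-1)^{L-1}/(m-1)!$ as $L\to\infty$. So the majorant converges iff $\zeta(\sigma)-1<1/(m-1)$, that is, $\zeta(\sigma)<m/(m-1)$. Because $\zeta$ is strictly decreasing on $(1,\infty)$ from $+\infty$ to $1$, there is a unique $r_m>1$ with $\zeta(r_m)=m/(m-1)$, and the condition is equivalent to $\sigma>r_m$. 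The value $r_2\approx1.728649$ is obtained by solving $\zeta(r)=2$ numerically.

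The bounds on $r_m$ for $m\ge3$ are the main obstacle. I plan to handle them with elementary estimates of $\zeta(\sigma)-1$.

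\emph{Lower bound.} We have $\zeta(\sigma)-1>2^{-\sigma}$. At $\sigma=\log(m-1)/\log 2$ this gives $\zeta(\sigma)-1>1/(m-1)$, so $r_m>\sigma$. I need $\sigma>1$ here, which holds since $m\ge3$ gives $m-1\ge2$. For $m=3$ the value is $\sigma=1$, where $\zeta$ has a pole, so the inequality still holds trivially because $r_m>1$.

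\emph{Upper bound.} I will use $\zeta(\sigma)-1 \le 2^{-\sigma}+\int_2^\infty t^{-\sigma}\,dt = 2^{-\sigma}+\frac{2^{1-\sigma}}{\sigma-1}$. Alternatively, I can group terms dyadically or compare with $(3/2)^{-\sigma}$-type bounds, aiming to show $\zeta(\sigma)-1\le (3/2)^{1-\sigma}$ for $\sigma>1$. Setting $\sigma=1+\log(m-1)/\log\frac32$ then gives $(3/2)^{1-\sigma}=1/(m-1)$, hence $\zeta(\sigma)<m/(m-1)$ and $r_m<\sigma$. Proving $\zeta(\sigma)-1<(3/2)^{1-\sigma}$ will need some care, possibly a split into small $\sigma$ and large $\sigma$. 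For large $\sigma$ the bound $2^{-\sigma}(1+2/(\sigma-1))$ is enough. Near $\sigma=1$, the left side behaves like $1/(\sigma-1)$ while the right side tends to $1$, which fails. So the argument must genuinely use the range $\sigma\ge 1+\log2/\log\frac32$ that is relevant for $m\ge3$, and I would verify that range numerically or with the integral bound. Finally, both bounds are of order $\log m$, which gives the last assertion.
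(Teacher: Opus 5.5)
Your proposal is correct and has the same structure as the paper's proof: start from (\ref{eq:Hylock}), justify swapping the $n$- and $L$-sums with the nonnegative majorant $\sum_L m!\,S(L-1,m-1)(\zeta(\Real s)-1)^L$, use monotonicity of $\zeta$ on $(1,\infty)$, and get the lower bound from the single term $2^{-r}$. It differs from the paper in two estimates.

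\emph{Majorant.} You use the exact generating function of $S(\cdot,m-1)$. This gives the sharp threshold, including divergence of the majorant at the boundary. The paper only needs sufficiency, which it gets from the elementary bound $m!\,S(L,m)\le m^L$ (surjections versus all maps), i.e.\ $m!\,S(L-1,m-1)\le m(m-1)^{L-1}$.

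\emph{Upper bound on $r_m$.} You left this step open, but the integral estimate you already wrote down closes it. No case split or numerics are needed. The inequality $2^{-\sigma}\bigl(1+\frac{2}{\sigma-1}\bigr)<(3/2)^{1-\sigma}$ is equivalent to $\frac23(3/4)^{\sigma}\bigl(1+\frac{2}{\sigma-1}\bigr)<1$. The left side is decreasing in $\sigma>1$ and equals $7\sqrt3/16<1$ at $\sigma=5/2$. Moreover, for $m\ge 3$ we have $1+\log(m-1)/\log\frac32\ge 1+\log 2/\log\frac32>5/2$, because $(3/2)^3<4$. Hence $\zeta(\sigma)-1<(3/2)^{1-\sigma}=\frac1{m-1}$ at $\sigma=1+\log(m-1)/\log\frac32$, which gives $r_m<\sigma$.

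The paper avoids even this small check. By convexity it compares each $n^{-r}$ with $\int_{n-1/2}^{n+1/2}x^{-r}\,dx$, which gives $\zeta(r)-1<\frac{(3/2)^{1-r}}{r-1}$. This is already below $(3/2)^{1-r}$ for every $r>2$, and the relevant point exceeds $2$ when $m\ge3$. Your cruder comparison, starting the integral at $2$ and keeping the $2^{-\sigma}$ term separately, works on the range that matters here, but only on $\sigma$ bounded away from $2$.
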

\begin{proof}%[Proof of Theorem \ref{thm:DN}]
By equation (\ref{eq:Hylock}), we find
\begin{align*}
\sum_{n=1}^\infty \frac{\mathcal N_m(n)}{n^s} &= \sum_{n=1}^\infty \frac 1{n^s} \sum_{L=1}^{\Omega(n)} m!\,S(L-1, m-1)\,c_L(n)
\\
&= \lim_{N\rightarrow\infty} \sum_{n=1}^N \frac 1{n^s} \sum_{L=1}^{\hat\Omega(N)} m!\,S(L-1, m-1)\,c_L(n)
\\
&= \lim_{N\rightarrow\infty} \sum_{L=1}^{\hat\Omega(N)} m!\,S(L-1, m-1) \sum_{n=1}^N \frac{c_L(n)}{n^s}
\\
&= \lim_{N\rightarrow\infty} \sum_{L=1}^{\infty} m!\,S(L-1, m-1) \sum_{n=1}^N \frac{c_L(n)}{n^s}.
\end{align*}
The Stirling numbers of the second kind satisfy the estimate
$m!~\,S(L, m)~\le~m^L$
(cf.\ \cite[Theorem 5.2.C (3f)]{comtet}).
Therefore
\begin{align*}
\left| m!\,S(L-1, m-1)\sum_{n=1}^N \frac{c_L(n)}{n^s} \right|
&\le m!\,S(L-1, m-1) \sum_{n=1}^N \frac{c_L(n)}{|n^s|}
\\
&\le m\,(m-1)^{L-1} \sum_{n=1}^\infty \frac{c_L(n)}{n^{\Real s}}
\\
&= m\,(m-1)^{L-1}\,(\zeta(\Real s) - 1)^L,
\end{align*}
which are the ($N$-independent) terms of a convergent geometric series (with summation parameter $L$) if
$(m-1)\,(\zeta(\Real s) - 1) < 1$. This latter condition is easily seen to
be equivalent to
$\zeta(\Real s) < \frac m{m-1}$.
Furthermore, we have pointwise convergence (for fixed $L\in\mathbb N$)
\begin{equation*}
\lim_{N\rightarrow\infty} m!\,S(L-1, m-1) \sum_{n=1}^N \frac{c_L(n)}{n^s}
= m!\,S(L-1, m-1)\,(\zeta(s) - 1)^L,
\end{equation*}
so equation (\ref{eq:DN}) follows by dominated convergence.

It remains to see for which $s\in\mathbb C$ the convergence condition
$\zeta(\Real s) < \frac m{m-1}$ holds.
On the real interval $(1, \infty)$, the $\zeta$ function is strictly monotone
decreasing and continuous, with limits
$\zeta(r)\rightarrow\infty$ $(r \rightarrow 1)$ and
$\lim\limits_{r\rightarrow\infty} \zeta(r) = 1$.
Therefore there is a unique $r_m > 1$ such that $\zeta(r_m) = \frac m{m-1}$,
and we have convergence for all $s\in\mathbb C$ with $\Real s > r_m$.

If $m\ge 3$ and $r \ge 1 + \log(m-1)/\log\frac 3 2$, then
$r > 2$, so
\begin{equation*}
\log (m-1) \le (r - 1) \log \frac 3 2 < (r - 1) \log \frac 3 2 + \log (r - 1)
\end{equation*}
and therefore
\begin{equation*}
1 + \frac 1{r-1}\left(\frac 32\right)^{1-r} < 1 + \frac 1{m-1} = \frac m{m-1}.
\end{equation*}
The upper bound on $r_m$ in the theorem now follows by observing (using the
convexity of the function $x^{-r}$ $(x > 0)$) that
\begin{align*}
\zeta(r) &= 1 + \sum_{n=2}^\infty \frac 1{n^r}
 < 1 + \sum_{n=2}^\infty \int_{n-\frac 12}^{n+\frac 12} \frac 1{x^r}\,d x
= 1 + \int_{\frac 32}^\infty \frac{d x}{x^r}
= 1 + \frac 1{r-1} \left(\frac 32\right)^{1-r}.
\end{align*}
For the lower bound on $r_m$, we note that if $r \le \log(m-1)/\log 2$, then
$2^r \le m - 1$ and hence
\begin{equation*}
 \zeta(r) \ge 1 + \frac 1{2^r} \ge 1 + \frac 1{m - 1} = \frac m{m-1}
\qedhere
\end{equation*}
\end{proof}

\begin{remark}
At a casual glance, the first term in the formula
\cite[equation (1.7)]{ivic}
\[
\zeta(s)=\frac{s}{s-1}+s\int_1^\infty([t]-t)t^{-s-1}\, d t \qquad (s > 1)
\]
would suggest that $r_m \approx m$; however, the inequality in Theorem \ref{thm:DN} shows that the integral term causes a considerable adjustment.
A good estimate of the integral could lead to a better bound for $r_m$.
\end{remark}

\begin{remark}
Following the calculation in the proof of Theorem \ref{thm:DN} but starting
from equation (\ref{enum22}) instead of equation (\ref{eq:Hylock}) and using
the Dirichlet series for the balanced divisor function $c_j^{(-j)}$
(equation (\ref{eq:Dbal})),
we obtain an alternative form for the Dirichlet series of $\mathcal N_m$,
\begin{equation*}
\sum_{n=1}^\infty \frac{\mathcal N_m(n)}{n^s} = \sum_{L=1}^\infty m!\,S(L,m) \left(1 - \frac 1{\zeta(s)}\right)^L.
\end{equation*}
This series is absolutely convergent for $\Real s > r_m$, but we note that the
majorant
\begin{equation*}
\left| m!\,S(L, m) \sum_{n=1}^N \frac{c_L^{(-L)}(n)}{n^s} \right|
\le m!\,S(L, m) \sum_{n=1}^N \frac{b_L(n)}{n^{\Real s}}
\end{equation*}
\begin{equation*}
\le m!\,S\left(L,m)\,(\frac{\zeta(\Real s)}{\zeta(2 \Real s)} - 1\right)^L
\end{equation*}
(using $|c_L^{(-L)}| \le b_L$ and the Dirichlet series in Theorem \ref{thm:Db}) will be summable if and only
if $\frac{\zeta(s)}{\zeta(2s)} < \frac{m+1}m=1+\frac{1}{m}$, which is a slightly different
condition.
\end{remark}

\subsection*{Disclosure statement} No potential conflict of interests was reported by the authors.

\vspace{5mm}

\author{\noindent M. N. Huxley,\,\,\,M. C. Lettington,\,\,\,K. M. Schmidt \\
\,\,\,\,Department of Mathematics\\
Cardiff University,\,\,\,Abacws Building\\
Senghennydd Road,\,\,\, CF24 4AG\\
E-mail: Huxley@cardiff.ac.uk\\
\qquad LettingtonMC@cardiff.ac.uk\\
\qquad SchmidtKM@cardiff.ac.uk}

\end{document}